\def\multiset#1#2{\ensuremath{\left(\kern-.3em\left(\genfrac{}{}{0pt}{}{#1}{#2}\right)\kern-.3em\right)}}
\newtheorem{theorem}{Theorem}
\newtheorem{lemma}[theorem]{Lemma}
\newtheorem{corollary}[theorem]{Corollary}
\theoremstyle{definition}
\newtheorem{definition}[theorem]{Definition}
\theoremstyle{remark}
\newtheorem{remark}[theorem]{Remark}
\title[Graphs associated with orthgonal collections of $k$-planes over finite fields]
{Graphs associated with orthogonal collections \\of $k$-planes over finite fields}
\author{Semin Yoo}
\date{\today}
\address{Department of Mathematics, University of Rochester, Rochester, NY, USA}
\email{syoo19@ur.rochester.edu}
\subjclass[2010]{15A63, 05C35}
\keywords{quadratic forms over finite fields, clique-free graphs, spectral graph theory}
\begin{document}

\maketitle

\begin{abstract}
We study graphs coming from quadratic spaces over finite fields via orthogonality which generalize a recent result given by Bishnoi, Ihringer, and Pepe (2019). More precisely, we study the graph $\Gamma^{\square}(n,k,q)$ as follows: the vertex set is the set of $k$-dimensional quadratic subspaces of a fixed Lorentzian quadratic space $(\mathbb{F}_{q}^{n},x_{1}^{2}+\cdots+x_{n-1}^{2}+\lambda x_{n}^{2})$ which are isometrically isomorphic to $x_{1}^{2}+\cdots+x_{k}^{2}$. Here $\lambda$ is a nonsquare in $\mathbb{F}_{q}$, and two vertices $x,y$ are adjacent if $x \subseteq y^{\perp}$. 
\end{abstract}

\section{Introduction and statements of results}

In \cite{BIP}, Bishnoi, Ihringer and Pepe construct clique-free pseudorandom graphs coming from finite geometry. Main ingredient is to use the theory of quadratic forms over finite fields. A history of their use can be found in the section $1$ of \cite{BIP}. In this paper, we generalize their results by changing their vertex set from $1$-dimensional quadratic subspaces of the Euclidean type to $k$-dimensional quadratic subspaces of the Euclidean type inside of the quadratic space $(\mathbb{F}_{q}^{n},\lambda \text{dot}_{n})$, where $\lambda$dot$_{n}=x_{1}^{2}+\cdots+x_{n-1}^{2}+\lambda x_{n}^{2}$, $\lambda$ is a nonsquare in $\mathbb{F}_{q}$ and $q$ is an odd prime power. 

\smallskip
The main goal of this paper is to study the properties of the graphs listed in the last column of Table \ref{table}.
\begin{table}[H]\label{table}
\begin{center}
\begin{tabular}{c||c|c}
 &  \textbf{The results in \cite{BIP} } & \textbf{The results in this paper} \\ \hline \hline
\textbf{ambient space} &  $(\mathbb{F}^{n}_{q},\lambda \text{dot}_{n})$ & 
$(\mathbb{F}^{n}_{q},\lambda \text{dot}_{n})$ \\ \hline
\textbf{vertex set} &  spacelike lines of $\mathbb{F}^{n}_{q}$ & 
dot$_{k}$-subspaces of $\lambda$dot$_{n}$ \\ \hline
\textbf{number of vertices} &  $(1+o(1))q^{n-1}/2$ & 
$(1+o(1))q^{k(n-k)}/2$ \\ \hline
\textbf{adjacency relation}   & $x \sim y ~ \Leftrightarrow ~ x \subseteq y^{\perp}$  & $x \sim y ~ \Leftrightarrow ~ x \subseteq y^{\perp}$ \\ \hline
\textbf{graph} & $\Gamma^{\square}(n,q)$ & $\Gamma^{\square}(n,k,q)$ \\ \hline
\multirow{4}{*}{\textbf{properties of the graph}} & (1) vertex-transitive & (1) vertex and arc-transitive  \\ 
& (2) $K_{2}$-free for any $n \geq 2$ & (2) $K_{2}$-free for any $k \geq n/2$ \\
 & (3) $K_{n}$-free for all $n \geq 2$   &  (3) $K_{l}$-free for all $l > \left [ \frac{n-1}{k} \right ]$\\
 & (4) $(n',d',\lambda')$-graph & (4) $(n'',d'',\lambda'')$-graph \\ 
\end{tabular}
\caption{The comparison between the results in \cite{BIP} and this paper.}
\label{comp}
\end{center}
\end{table}
\noindent where $n'=\Theta(q^{n-1}),d'=\Theta(q^{n-2}),\lambda' =\Theta(q^{(n-2)/2}),n''=\Theta(q^{k(n-k)}),d''=\Theta(q^{k(n-2k)})$ and $\lambda''=\Theta(q^{k(n-2k)/2})$.

\smallskip 
Furthermore, as an application, we study the orthogonality in the collection of $k$-dimensional quadratic subspaces the Euclidean type of the Lorentzian space $(\mathbb{F}_{q}^{n},\lambda \text{dot}_{n})$. Let us call a $k$-dimensional quadratic subspace $W$ of $(\mathbb{F}_{q}^{n},\lambda \text{dot}_{n})$, a $\mathbf{\textbf{dot}_{k}}\textbf{\text{-subspace}}$ if $W$ is isometrically isomorphic to $x_{1}^{2}+\cdots+x_{k}^{2}$ with the restricted quadratic form $\lambda$dot$_{n}|_{W}$. The number of dot$_{k}$-subspaces has an interesting combinatorial description. To know more about it, see \cite{Yo}. Given two collections of dot$_{k}$-subspaces $X,Y$, we find a necessary condition that ensures there exist dot$_{k}$-subspaces in the collections $X$ and $Y$ which are orthogonal to each other.   

\smallskip
\subsection*{Acknowledgements.} 
The author would like to express gratitude to Jonathan Pakianathan for helpful discussions and encouragement for this work. Also, the author would like to thank Ferdinand Ihringer for valuable comments for the revision.

\smallskip
\section{Generalization}

\smallskip
We briefly review some facts about quadratic forms over finite fields and mainly follow \cite{Yo}. Recall that any nondegenerate quadratic 
form on a vector space of dimension $n$ over a finite field is equivalent to one of the following two:
\begin{align*}
\text{dot}_n(\mathbf{x}) & :=x_{1}^{2}+\cdots+x_{n-1}^{2}+x_{n}^{2} \quad \text{or}\\
\text{$\lambda$dot$_n$}(\mathbf{x}) & :=x_{1}^{2}+\cdots+x_{n-1}^{2}+\lambda x_{n}^{2} \quad \text{for some nonsqaure $\lambda$ in $\mathbb{F}_{q}$}.
\end{align*}
Following physics notation, we will call dot$_{n}$ the \textbf{Euclidean form} and call $\lambda$dot$_{n}$ the \textbf{Lorentzian form}. When there is no danger of confusion, we will let dot$_{n}$ and $\lambda$dot$_{n}$ denote the quadratic spaces $(\mathbb{F}_q^n, \text{dot}_n)$ and $(\mathbb{F}_q^n, \lambda$dot$_n)$ respectively.

\begin{remark}
Finite geometers use the following classification of nondegenerate quadratic forms on a $n$-dimensional vector space $V$ over $\mathbb{F}_{q}$ instead of the previous one. A quadratic form $Q$ is equivalent to one of the following forms:
\begin{itemize}
\item (1) hyperbolic: $k\mathbb{H}$ when $n=2k$ and here $\mathbb{H}$ is the hyperbolic space,
\item (2) parabolic: $(k-1)\mathbb{H} \oplus (x^{2}-\lambda y^{2})$ when $n=2k$ and here $\lambda$ is nonsquare, 
\item (3) elliptic: $k\mathbb{H}\oplus cx^{2}$ when $n=2k+1$ and where $c$ is either $1$ or a nonsquare. 
\end{itemize}
One can correspond the previous classification to one of these three. For example, the elliptic case can be considered as both the Euclidean and the Lorentzian type by comparing their discriminants. 
\begin{itemize}
\item If $n \equiv 1$ (mod $4$), then $\mathbb{H}\oplus \cdots \oplus \mathbb{H} \oplus 1$ is equivalent to dot$_{n}$,
\item If $n \equiv 1$ (mod $4$), then $\mathbb{H}\oplus \cdots \oplus \mathbb{H} \oplus \lambda $ is equivalent to $\lambda$dot$_{n}$.
\item If $n \equiv 3$ (mod $4$), then $\mathbb{H}\oplus \cdots \oplus \mathbb{H} \oplus 1$ is equivalent to $\lambda$dot$_{n}$,
\item If $n \equiv 3$ (mod $4$), then $\mathbb{H}\oplus \cdots \oplus \mathbb{H} \oplus \lambda $ is equivalent to dot$_{n}$.
\end{itemize}
\end{remark}
\smallskip
In particular, there are three possible $1$-dimensional 
quadratic subspaces in $(\mathbb{F}^{n}_{q},\lambda\text{dot}_{n})$ up to equivalence:
(1) $\text{dot}_{1}$, (2) $\lambda\text{dot}_{1}$, and the degenerate case (3) $0$. Following physics notation again, we define three types of lines in quadratic space. This concept used in \cite{BIP} and \cite{Yo}.

\begin{definition}\cite{Yo}
The type of a line $l$ through the origin in $(\mathbb{F}_{q}^{n},\lambda\text{dot}_{n})$ is \textbf{spacelike} if $|l|$ is a square, \textbf{timelike} if $|l|$ is a nonsquare, and \textbf{lightlike} if $|l|$ is $0$. Here, we let $|l|$ denote the value $\lambda \text{dot}_{n}(\mathbf{x})$ for any nonzero $\mathbf{x}$ in $l$. 
\end{definition}


\smallskip 
It is easy to verify that the concept of line type is well-defined. We now define two graphs $\Gamma^{\square}(n,q)$ and $\Gamma^{\square}(n,k,q)$. In \cite{BIP}, the vertex set consists of spacelike lines of $\lambda$dot$_{n}$ denoted by $X_{\square}$ and two vertices $x$ and $y$ are adjacent if $x \subseteq y^{\perp}$. We denote the graph coming from this relation by $\Gamma^{\square}(n,q)$. Moreover, the size of the vertex set can be shown to be
\[(1+o(1))\frac{q^{n-1}}{2},\]
where $o(1)$ goes to zero as $q$ goes to infinity. The proof can be found in \cite{BIP}.

\smallskip
In this paper, we define a vertex set $V$ to be dot$_{k}$-subspaces of $(\mathbb{F}_{q}^{n},\lambda\text{dot}_{n})$. Note that dot$_{1}$-subspaces of $\lambda$dot$_{n}$ are just spacelike lines of $\lambda$dot$_{n}$. Let us define the graph $\Gamma^{\square}(n,k,q)$ with vertex set $V$ and $x$ adjacent to $y$ if and only if $x \subseteq y^{\perp}$ for any $x,y$ in $V$. 

\smallskip 
We now use some counts coming from \cite{Yo} to compute the number of the vertices of $\Gamma^{\square}(n,k,q)$. By section 4 of \cite{Yo}, the number of dot$_{k}$-subspaces of $\lambda$dot$_{n}$ denoted by $\binom{\lambda n}{k}_{d}$ is given as follows:
\[(1+o(1))\frac{q^{k(n-k)}}{2}.\]
As $k=1$, it becomes the size of the vertex set $X_{\square}$ of $\Gamma^{\square}(n,q)$.

\smallskip
We rely on the following two equivalent fundamental facts in the theory of quadratic forms. \begin{theorem}[Witt's extension theorem]\label{we}\cite{Cl}
Let $X_{1},X_{2}$ be quadratic spaces and $X_{1} \cong X_{2}$, $X_{1}=U_{1} \oplus V_{1},X_{2}=U_{2}\oplus V_{2}$, and $f:V_{1} \longrightarrow V_{2}$ an isometry. Then there is an isomtery $F:X_{1}\longrightarrow X_{2}$ such that $F|_{V_{1}}=f$ and $F(U_{1})=U_{2}$.
\end{theorem}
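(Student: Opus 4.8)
The plan is to deduce Witt's extension theorem from the single assertion that any isometry between two subspaces of a fixed nondegenerate quadratic space extends to an isometry of the ambient space, and then to prove that assertion by induction on the dimension of the subspace, the only analytic input being a reflection lemma. Throughout I take the ambient spaces to be nondegenerate (regular), which is the standing situation for the forms $\mathrm{dot}_n$ and $\lambda\mathrm{dot}_n$ considered here, and I write $B(x,y)=\tfrac12\bigl(Q(x+y)-Q(x)-Q(y)\bigr)$ for the polar form.

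First I would use $X_1\cong X_2$ to fix an isometry $g\colon X_1\to X_2$, set $W:=g(V_1)\subseteq X_2$, and form $\psi:=f\circ(g|_{V_1})^{-1}\colon W\to V_2$, an isometry between two subspaces of the single space $X_2$. If $\psi$ extends to an isometry $h$ of $X_2$, then $F:=h\circ g$ is an isometry $X_1\to X_2$ with $F|_{V_1}=f$. The requirement $F(U_1)=U_2$ is then free: since the decompositions $X_i=U_i\oplus V_i$ are orthogonal and $X_i$ is nondegenerate, one has $U_i=V_i^{\perp}$, and any isometry carries orthogonal complements to orthogonal complements, so $F(U_1)=F(V_1^{\perp})=F(V_1)^{\perp}=V_2^{\perp}=U_2$. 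The same nondegeneracy shows $V_1$ is itself nondegenerate: a radical vector of $V_1$ would be orthogonal to both $V_1$ and $U_1$, hence to all of $X_1$. Thus everything reduces to the extension assertion for the nondegenerate subspace $W$.

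For the extension assertion, let $(X,Q)$ be nondegenerate and $\psi\colon W_1\to W_2$ an isometry of nondegenerate subspaces; I induct on $\dim W_1$, the case $\dim W_1=0$ being trivial. A nonzero nondegenerate space carries an anisotropic vector (otherwise $Q\equiv 0$ forces $B\equiv 0$), so I would write $W_1=\langle v\rangle\perp W_1'$ with $Q(v)\neq 0$ and set $w:=\psi(v)$, which is anisotropic with $Q(w)=Q(v)$. The reflection lemma below produces an isometry $\tau$ of $X$ with $\tau(v)=w$. Then $\tau^{-1}\circ\psi$ fixes $v$ and therefore maps $W_1'\subseteq\langle v\rangle^{\perp}$ into the nondegenerate space $\langle v\rangle^{\perp}$; by the inductive hypothesis applied inside $\langle v\rangle^{\perp}$ it extends to an isometry $\rho$ of $\langle v\rangle^{\perp}$, and then $\tau\circ(\mathrm{id}_{\langle v\rangle}\perp\rho)$ extends $\psi$.

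The crux, and the step I expect to carry the real content, is the reflection lemma: for anisotropic $v,w$ with $Q(v)=Q(w)$ there is a product of at most two reflections sending $v$ to $w$, where the reflection in $u^{\perp}$ is $\tau_u(x)=x-\tfrac{2B(x,u)}{Q(u)}\,u$. The key identity is $Q(v-w)+Q(v+w)=2Q(v)+2Q(w)=4Q(v)\neq 0$, so at least one of $v-w,\,v+w$ is anisotropic. If $v-w$ is anisotropic then $\tau_{v-w}(v)=w$ directly, since $B(v,v-w)=\tfrac12 Q(v-w)$; if instead $v+w$ is anisotropic then $\tau_{v+w}(v)=-w$ and $\tau_w$ sends $-w$ to $w$, so $\tau_w\circ\tau_{v+w}$ does the job. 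Everything else is bookkeeping. The only case needing separate treatment, were the theorem stated without the nondegeneracy that the orthogonal decomposition here guarantees, is a degenerate subspace: one would complete an isotropic radical vector to a hyperbolic plane inside $X$ to shrink the radical and feed back into the induction; in the present setting $V_1$ is nondegenerate, so this subtlety does not arise.
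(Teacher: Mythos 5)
The paper does not prove this theorem; it is quoted from the reference \cite{Cl}, and your argument is the standard one found there: reduce to extending an isometry between subspaces of a single nondegenerate space, induct on dimension by splitting off an anisotropic vector, and use the reflection identity $Q(v-w)+Q(v+w)=4Q(v)\neq 0$ to move $v$ to $w$ by at most two reflections. The computations ($B(v,v-w)=\tfrac12 Q(v-w)$, hence $\tau_{v-w}(v)=w$, and $\tau_w\circ\tau_{v+w}(v)=w$ in the other case) and the deduction $F(U_1)=F(V_1^{\perp})=F(V_1)^{\perp}=U_2$ are all correct. Two tacit hypotheses you rely on --- that $\oplus$ denotes an orthogonal direct sum inside a nondegenerate ambient space, and that the characteristic is odd (needed for the polar form $B=\tfrac12(Q(x+y)-Q(x)-Q(y))$ and for $4Q(v)\neq 0$) --- are not spelled out in the statement as printed, but both hold in every use the paper makes of the theorem, since the forms are the nondegenerate $\mathrm{dot}_n$ and $\lambda\mathrm{dot}_n$ over $\mathbb{F}_q$ with $q$ an odd prime power. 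So the proof is complete for the setting at hand.
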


\begin{theorem}[Witt's cancellation theorem]\label{wc}\cite{Cl}
Let $U_{1},U_{2},V_{1},V_{2}$ be quadratic spaces where $V_{1}$ and $V_{2}$ are isometrically isomorphic. If $U_{1}\oplus V_{1} \cong U_{2} \oplus V_{2}$, then $U_{1} \cong U_{2}$.
\end{theorem}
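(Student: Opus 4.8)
The plan is to deduce Witt's cancellation theorem directly from Witt's extension theorem (Theorem \ref{we}), which is already available to us and is the logically stronger of the two equivalent facts. Concretely, I would set $X_{1}=U_{1}\oplus V_{1}$ and $X_{2}=U_{2}\oplus V_{2}$, where throughout $\oplus$ denotes an orthogonal direct sum and the ambient forms are nondegenerate. By hypothesis the ambient spaces satisfy $X_{1}\cong X_{2}$, and the summands are isometrically isomorphic, so I may fix an isometry $f\colon V_{1}\longrightarrow V_{2}$. These are exactly the three ingredients that the extension theorem requires as input, so no preliminary work is needed before invoking it.

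Applying Theorem \ref{we} to the data $X_{1}=U_{1}\oplus V_{1}$, $X_{2}=U_{2}\oplus V_{2}$, and $f\colon V_{1}\longrightarrow V_{2}$ produces an isometry $F\colon X_{1}\longrightarrow X_{2}$ with $F|_{V_{1}}=f$ and, crucially, $F(U_{1})=U_{2}$. Because $F$ is a bijection and $F(U_{1})=U_{2}$, its restriction $F|_{U_{1}}\colon U_{1}\longrightarrow U_{2}$ is a linear bijection; and because $F$ preserves the quadratic form on all of $X_{1}$, it preserves it in particular on the subspace $U_{1}$. Hence $F|_{U_{1}}$ carries the form induced on $U_{1}$ to the form induced on $U_{2}$, so it is an isometry and $U_{1}\cong U_{2}$, which is the desired conclusion.

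The one point that genuinely needs care is this last step: one must confirm that the restriction of a global isometry to a distinguished orthogonal summand is itself an isometry of the summands equipped with their induced forms, not merely a linear isomorphism. This is where the orthogonality of the decomposition enters, since it guarantees that the form on $U_{1}$ is exactly the restriction to $U_{1}$ of the form on $X_{1}$; with that identification the claim is immediate from the definition of an isometry. I do not expect a serious obstacle here, because all of the substantive content — building an ambient isometry that simultaneously realizes $f$ on $V_{1}$ and matches up the complements $U_{1}$ and $U_{2}$ — has already been absorbed into the extension theorem. For completeness I would remark that the reverse implication (recovering extension from cancellation) is the harder direction, typically proved by induction on $\dim V_{1}$ using the existence of reflections, but that direction is not needed for the present statement.
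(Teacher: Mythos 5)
The paper does not prove this statement at all: it is quoted as a known fundamental fact with a citation to Clark's notes \cite{Cl}, and the surrounding text merely records that it is equivalent to Witt's extension theorem. Your derivation of cancellation from the extension theorem is the standard one and is correct: the hypotheses $U_{1}\oplus V_{1}\cong U_{2}\oplus V_{2}$ and $V_{1}\cong V_{2}$ supply exactly the input data for Theorem \ref{we}, the resulting global isometry $F$ satisfies $F(U_{1})=U_{2}$, and since the decompositions are orthogonal the form induced on $U_{1}$ is the restriction of the ambient form, so $F|_{U_{1}}$ is an isometry onto $U_{2}$. You are also right that this is the easy direction of the equivalence; the only standing hypothesis worth making explicit (as in \cite{Cl}) is nondegeneracy of the spaces involved, which you note.
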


Notice that the graph $\Gamma^{\square}(n,k,q)$ is a empty graph if $k \geq n$. Thus we assume that $k<n$.
\begin{lemma}
$\Gamma^{\square}(n,k,q)$ is $K_{2}$-free for any $k \geq n/2$.
\end{lemma}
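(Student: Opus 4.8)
The first point to make is that $K_2$ is simply a single edge, so $\Gamma^{\square}(n,k,q)$ being $K_2$-free is the same as it having no edges at all. The plan is therefore to show that under the hypothesis $k \geq n/2$ no two dot$_k$-subspaces $x,y$ can be adjacent, i.e. no pair can satisfy $x \subseteq y^{\perp}$. I would argue by contradiction, supposing such a pair exists.

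The first ingredient is a dimension count. Since $y$ is a dot$_k$-subspace, the restriction $\lambda\text{dot}_n|_y$ is nondegenerate, so $y$ is a nondegenerate subspace and $\mathbb{F}_q^n$ splits as the orthogonal direct sum $y \oplus y^{\perp}$ with $\dim y^{\perp} = n-k$ and $y^{\perp}$ itself nondegenerate. Adjacency $x \subseteq y^{\perp}$ then forces $k = \dim x \leq \dim y^{\perp} = n-k$, that is $2k \leq n$. Hence for every $k > n/2$ adjacency is impossible for dimension reasons alone, and the lemma follows at once in that range.

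The only remaining case is the boundary $k = n/2$ (which requires $n$ even), and this is where I expect the actual content to sit. Here the inequality above is an equality: $\dim x = n-k = \dim y^{\perp}$, so $x \subseteq y^{\perp}$ upgrades to $x = y^{\perp}$. Because $x$ is a dot$_k$-subspace, $y^{\perp} \cong \text{dot}_k$, and by hypothesis $y \cong \text{dot}_k$ as well. Substituting into the orthogonal decomposition gives
\[
\lambda\text{dot}_n \;\cong\; y \oplus y^{\perp} \;\cong\; \text{dot}_k \oplus \text{dot}_k \;=\; \text{dot}_{2k} \;=\; \text{dot}_n .
\]
To close the argument I would rule this out by comparing invariants: the diagonal forms $\text{dot}_n$ and $\lambda\text{dot}_n$ have discriminants $1$ and $\lambda$, which are distinct in $\mathbb{F}_q^{*}/(\mathbb{F}_q^{*})^2$ since $\lambda$ is a nonsquare, so over a finite field the two forms cannot be isometric. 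Equivalently, writing $\text{dot}_n \cong \text{dot}_{n-1} \oplus \langle 1\rangle$ and $\lambda\text{dot}_n \cong \text{dot}_{n-1} \oplus \langle\lambda\rangle$ and applying Witt cancellation (Theorem \ref{wc}) would give $\langle 1\rangle \cong \langle\lambda\rangle$, again forcing $\lambda$ to be a square. This contradiction disposes of the case $k = n/2$ and completes the proof.
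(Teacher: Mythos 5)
Your proof is correct and follows essentially the same route as the paper's: both rest on the orthogonal splitting $\lambda\text{dot}_n = y \oplus y^{\perp}$, the dimension count $k \le n-k$, and the fact that $\text{dot}_m$ and $\lambda\text{dot}_m$ are never isometric (via Witt cancellation or, equivalently, the discriminant). If anything, yours is the more careful write-up: the paper identifies $x^{\perp} \cong \lambda\text{dot}_{n-k}$ by Witt cancellation and passes over the boundary case $k=n/2$ rather quickly, whereas you treat it explicitly and correctly.
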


\begin{proof}
Let $x$ be a vertex of $\Gamma^{\square}(n,k,q)$. Then there is a $k$-dimensional subspace $y$ such that $x \oplus y=\lambda \text{dot}_{n}$. Notice that we can also decompose $\lambda$dot$_{n}=x \oplus \lambda$dot$_{n-k}$. By Witt's cancellation theroem, we have $y=\lambda $dot$_{n-k}$ by cancelling $x$. To be a $K_{2}$-free graph, $n-k \leq k$ have to be hold. Thus $\Gamma^{\square}(n,k,q)$ is an edgeless graph when $n/2 \leq k$. 
\end{proof}

\begin{theorem}
For any $l > \left \lfloor \frac{n-1}{k}  \right \rfloor $, $\Gamma^{\square}(n,k,q)$ is $K_{l}$-free. Here, $\left \lfloor x  \right \rfloor$ is the largest integer less than or equal $x$.
\end{theorem}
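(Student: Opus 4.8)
The plan is to convert a clique into a purely linear-algebraic incidence and then obstruct it using the distinction between the Euclidean and Lorentzian forms. First I would note that the relation $x\subseteq y^{\perp}$ is symmetric (the bilinear form attached to $\lambda\text{dot}_{n}$ is symmetric, so $x\subseteq y^{\perp}\Leftrightarrow y\subseteq x^{\perp}$), and hence that a copy of $K_{l}$ in $\Gamma^{\square}(n,k,q)$ is exactly a family $x_{1},\dots,x_{l}$ of $\text{dot}_{k}$-subspaces that are \emph{pairwise orthogonal}. The whole statement then reduces to bounding how many pairwise orthogonal $\text{dot}_{k}$-subspaces can fit inside $\lambda\text{dot}_{n}$.

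Second, I would show that pairwise orthogonality together with non-degeneracy forces an orthogonal direct sum. Each $x_{i}\cong\text{dot}_{k}$ is non-degenerate, so $x_{i}\cap x_{i}^{\perp}=0$; a short induction (at each stage $x_{1}\oplus\cdots\oplus x_{i-1}$ is non-degenerate and orthogonal to $x_{i}$, so the two meet only in $0$) shows that $S:=x_{1}+\cdots+x_{l}=x_{1}\oplus\cdots\oplus x_{l}$ is a genuine orthogonal direct sum of dimension $kl$ whose restricted form is the orthogonal sum of $l$ copies of $\text{dot}_{k}$, i.e.\ $S\cong\text{dot}_{kl}$. In particular $S$ is a non-degenerate $kl$-dimensional subspace of $\lambda\text{dot}_{n}$, which already forces $kl\le n$.

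Third, and this is where the $-1$ in $\lfloor (n-1)/k\rfloor$ appears, I would exclude the boundary case $kl=n$. Since $S$ is non-degenerate we may split $\lambda\text{dot}_{n}=S\perp S^{\perp}$, so $\lambda\text{dot}_{n}\cong\text{dot}_{kl}\perp S^{\perp}$ with $\dim S^{\perp}=n-kl$. If $kl=n$ then $S^{\perp}=0$ and $S$ is all of $\mathbb{F}_{q}^{n}$, giving $\text{dot}_{n}\cong\lambda\text{dot}_{n}$, which is false because these are the two inequivalent normal forms (their discriminants differ by the nonsquare $\lambda$). Equivalently, Witt's cancellation theorem applied to $\text{dot}_{kl}\perp S^{\perp}\cong\lambda\text{dot}_{n}$ forces $S^{\perp}\cong\lambda\text{dot}_{n-kl}$, whose discriminant is a nonsquare, so $S^{\perp}\neq 0$. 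Either way $n-kl\ge 1$, that is $kl\le n-1$, hence $l\le\lfloor (n-1)/k\rfloor$. Contrapositively, if $l>\lfloor (n-1)/k\rfloor$ then $kl\ge n$ and no such clique exists, so $\Gamma^{\square}(n,k,q)$ is $K_{l}$-free.

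I expect the main obstacle to be precisely this third step: the naive dimension count of the second step only yields $kl\le n$ and therefore the weaker bound $l\le\lfloor n/k\rfloor$. Obtaining the sharp threshold relies on the ambient space being Lorentzian (nonsquare discriminant) while any clique spans a Euclidean-type subspace (square discriminant), so that $\text{dot}_{n}$ simply cannot occur as a full-dimensional subspace of $\lambda\text{dot}_{n}$. The remaining care is routine bookkeeping, confirming that the sum of the $x_{i}$ is direct and carries the form $\text{dot}_{kl}$, both of which follow from non-degeneracy of each $x_{i}$ and Witt cancellation.
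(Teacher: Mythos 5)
Your proof is correct and follows essentially the same route as the paper: identify an $l$-clique with a pairwise-orthogonal direct sum isometric to $\text{dot}_{kl}$ sitting inside $\lambda\text{dot}_{n}$, and rule out the boundary case $kl=n$ because $\text{dot}_{n}\not\cong\lambda\text{dot}_{n}$. You merely supply the directness-of-sum induction and the discriminant/Witt-cancellation details that the paper's one-line argument leaves implicit.
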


\begin{proof}
In the graph $\Gamma^{\square}(n,k,q)$, $l$-clique is a $l$ times of direct sums of dot$_{k}$-subspaces which are orthogonal each other. In other words, it is dot$_{kl}$-subspaces. Thus the existence of $l$-clique is equivalent to have the condition $kl\leq n-1$ since dot$_{kl}\subseteq \lambda \text{dot}_{n}$. This is also equivalent to $l \leq \left \lfloor \frac{n-1}{k}  \right \rfloor$.
\end{proof}

\begin{lemma} Let $k < n/2$. $\Gamma^{\square}(n,k,q)$ is vertex-transitive and arc-transitive.
\end{lemma}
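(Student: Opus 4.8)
The plan is to realize all the graph automorphisms I need concretely inside the orthogonal group of the ambient space. Let $G = O(\mathbb{F}_q^n, \lambda\text{dot}_n)$ be the group of linear isometries of the Lorentzian space. First I would verify that $G$ acts on $\Gamma^{\square}(n,k,q)$ by graph automorphisms: an isometry $g$ carries a subspace $W$ to a subspace $g(W)$ isometric to $W$, so $g$ permutes the $\text{dot}_k$-subspaces, i.e.\ the vertex set; and $g$ preserves orthogonality, so $x \subseteq y^{\perp}$ if and only if $g(x) \subseteq g(y)^{\perp}$, hence $g$ preserves adjacency. Therefore $G \leq \mathrm{Aut}\big(\Gamma^{\square}(n,k,q)\big)$, and it suffices to show $G$ is transitive on vertices and on arcs.

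For vertex-transitivity, I would take any two $\text{dot}_k$-subspaces $x$ and $y$. By definition each is isometrically isomorphic to $\text{dot}_k$, so there is an isometry $f\colon x \to y$. Since $\text{dot}_k$ is nondegenerate, each subspace splits off orthogonally, giving $\lambda\text{dot}_n = x \oplus x^{\perp} = y \oplus y^{\perp}$. Applying Witt's extension theorem (Theorem \ref{we}) with $X_1 = X_2 = \lambda\text{dot}_n$, $V_1 = x$, $V_2 = y$, $U_1 = x^{\perp}$, $U_2 = y^{\perp}$ then yields an isometry $F \in G$ with $F|_x = f$, so $F(x) = y$. This gives transitivity on vertices.

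For arc-transitivity I would argue similarly but one dimension higher. Let $(x_1,y_1)$ and $(x_2,y_2)$ be two arcs, so $x_i \subseteq y_i^{\perp}$ with each $x_i,y_i$ a $\text{dot}_k$-subspace. Because $x_i$ and $y_i$ are orthogonal $\text{dot}_k$-subspaces, their sum is an orthogonal direct sum $x_i \oplus y_i \cong \text{dot}_{2k}$; the hypothesis $k < n/2$ forces $2k \leq n-1$, which is exactly the condition that such a subspace embeds in $\lambda\text{dot}_n$ and that arcs exist. I would then choose isometries $f_x\colon x_1 \to x_2$ and $f_y\colon y_1 \to y_2$, and because the summands on each side are mutually orthogonal, the block map $f := f_x \oplus f_y \colon x_1 \oplus y_1 \to x_2 \oplus y_2$ is an isometry respecting the decompositions. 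Since $x_1 \oplus y_1$ is nondegenerate, Witt's extension theorem applied with $V_1 = x_1 \oplus y_1$, $V_2 = x_2 \oplus y_2$, $U_i = (x_i \oplus y_i)^{\perp}$ produces $F \in G$ with $F|_{x_1 \oplus y_1} = f$, whence $F(x_1) = x_2$ and $F(y_1) = y_2$; thus $F$ carries the arc $(x_1,y_1)$ to $(x_2,y_2)$.

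The bookkeeping steps (that $G$ preserves the vertex set and adjacency, and that an orthogonal sum of two $\text{dot}_k$'s is a $\text{dot}_{2k}$) are routine. The one point deserving care, and the likely main obstacle, is the construction of the combined isometry $f_x \oplus f_y$ in the arc case: I must use the adjacency condition $x_i \subseteq y_i^{\perp}$ both to guarantee the sum is genuinely orthogonal, so that the block map preserves the quadratic form, and to ensure $x_1 \oplus y_1$ is nondegenerate, which is precisely what licenses extending $f$ to all of $\lambda\text{dot}_n$ via Witt's theorem.
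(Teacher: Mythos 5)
Your proposal is correct and follows essentially the same route as the paper: both arguments realize the needed automorphisms inside the orthogonal group of $\lambda\text{dot}_n$ and invoke Witt's extension theorem, for vertices by extending an isometry $x \to y$ and for arcs by extending an isometry of the orthogonal sums $x_1 \oplus y_1 \to x_2 \oplus y_2$. Your write-up is more detailed than the paper's (which compresses the arc case into one sentence about transitivity on pairs $(X, X^{\perp})$), but the underlying idea is identical.
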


\begin{proof}
By Witt's extension theorem, the orthogonal group of $\lambda$dot$_{n}$ acts transitively on the vertex set. Thus $\Gamma^{\square}(n,k,q)$ is vertex-transitive. An $1$-arc $(X,Y)$ consists of a pair of orthogonal dot$_{k}$-subspaces. Since the orthogonal group $\lambda$dot$_{n}$ acts transitively on $(X,X^{\perp})$, it follows that $\Gamma^{\square}(n,k,q)$ is arc-transitive.
\end{proof}
By algebraic graph theory, vertex transitive graphs implies regular. Hence $\Gamma^{\square}(n,k,q)$ is regular.
\begin{lemma}
Let $k < n/2$. The induced graph on the neighborhood of any vertex of $\Gamma^{\square}(n,k,q)$ is isomorphic to $\Gamma^{\square}(n-k,k,q)$.
\end{lemma}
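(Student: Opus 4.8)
The plan is to fix a vertex $X$, identify its orthogonal complement $X^{\perp}$ as an abstract quadratic space, and then observe that the neighborhood $N(X)$ together with its induced adjacency is literally a copy of the graph $\Gamma^{\square}$ built inside $X^{\perp}$.

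First I would note that since $X$ is a $\text{dot}_{k}$-subspace it is nondegenerate, so $(\mathbb{F}_{q}^{n},\lambda\text{dot}_{n})$ splits as the orthogonal direct sum $X \oplus X^{\perp}$. To pin down the isometry type of $X^{\perp}$, I would write $\lambda\text{dot}_{n} \cong \text{dot}_{k} \oplus \lambda\text{dot}_{n-k}$ by grouping the first $k$ coordinates against the rest. Since $X \cong \text{dot}_{k}$, applying Witt's cancellation theorem (Theorem \ref{wc}) to the isometry $X \oplus X^{\perp} \cong \text{dot}_{k} \oplus \lambda\text{dot}_{n-k}$ yields $X^{\perp} \cong \lambda\text{dot}_{n-k}$. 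This is the key structural step.

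Next I would identify the vertices of the induced subgraph. By symmetry of orthogonality, $Y \in N(X)$ iff $X \subseteq Y^{\perp}$ iff $Y \subseteq X^{\perp}$; moreover, a subspace $Y \subseteq X^{\perp}$ is a $\text{dot}_{k}$-subspace of $\lambda\text{dot}_{n}$ exactly when it is a $\text{dot}_{k}$-subspace of the restricted form on $X^{\perp}$, because the form induced on $Y$ is the same in either ambient space. Hence $N(X)$ is precisely the set of $\text{dot}_{k}$-subspaces of $X^{\perp}$. For adjacency, if $Y,Z \in N(X)$ then both lie in $X^{\perp}$, so the condition $Y \subseteq Z^{\perp}$ in $\lambda\text{dot}_{n}$ depends only on the bilinear form restricted to $Y \times Z$ and therefore coincides with orthogonality computed inside $X^{\perp}$.

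Finally, transporting along the isometry $X^{\perp} \cong \lambda\text{dot}_{n-k}$ obtained above: an isometry carries $\text{dot}_{k}$-subspaces to $\text{dot}_{k}$-subspaces and preserves orthogonality, so it furnishes a graph isomorphism from the induced subgraph on $N(X)$ onto $\Gamma^{\square}(n-k,k,q)$. The hypothesis $k < n/2$ guarantees $k < n-k$, placing the smaller graph in the nontrivial range. The only genuinely delicate point is the Witt-cancellation identification of $X^{\perp}$ as Lorentzian (rather than Euclidean) of dimension $n-k$; the remaining verifications amount to checking that ``orthogonality in the ambient space'' and ``orthogonality within $X^{\perp}$'' agree for subspaces already contained in $X^{\perp}$, which is immediate.
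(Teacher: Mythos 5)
Your proof is correct and follows essentially the same route as the paper: both hinge on identifying $X^{\perp}$ as a copy of $\lambda\text{dot}_{n-k}$ and then reading off the neighborhood as the $\text{dot}_{k}$-subspaces of that complement. The only cosmetic difference is that the paper first invokes vertex-transitivity to reduce to the standard vertex spanned by $e_{1},\dots,e_{k}$ and computes $x^{\perp}$ explicitly, whereas you treat an arbitrary vertex directly via Witt cancellation and also spell out the (easy) verification that adjacency inside $X^{\perp}$ agrees with ambient orthogonality, which the paper leaves implicit.
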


\begin{proof}
Since $\Gamma^{\square}(n,k,q)$ is vertex-transitive, without loss of generality, we can choose the vertex $x$ spanned by $e_{1},\cdots,e_{k}$, where $e_{i}$ are the standard orthonormal vectors. Then it is easy to check that $x$ is a dot$_{k}$-subspace. Since the ambient space is $\lambda$dot$_{n}$, $x^{\perp}$ is a $\lambda$dot$_{n-k}$ subspace. Therefore, the neighborhood of $x$ is isomorphic to $\Gamma^{\square}(n-k,k,q)$ if $k < n/2$.
\end{proof}
Thus $\Gamma^{\square}(n,k,q)$ is $d$-regular with 
\[d=\binom{\lambda (n-k)}{k}_{d}=(1+o(1))\frac{q^{k(n-2k)}}{2}.\]
To check $K_{2}$-freeness of $\Gamma^{\square}(n,k,q)$, we again rely on the fundamental fact in the theory of quadratic forms which is equivalent to Witt's extension theorem.

\smallskip
A $\bm{(n,d,\lambda)}$-\textbf{graph} is a $d$-regular graph with $n$ vertices and its second largest eigenvalue (in absolute value) at most $\lambda$. We already know that $\Gamma^{\square}(n,k,q)$ is $(1+o(1))q^{k(n-2k)}/2$-regular with $(1+o(1))q^{k(n-k)}/2$ vertices. In order to bound its second largest eigenvalue, we use interlacing of eigenvalues as \cite{BIP} does. See \cite{BIP} for the statement of interlacing. We first define a new graph $\overline{\Gamma}$ with the set of $k$-dimensional vector subspaces of $(\mathbb{F}_{q}^{n},\lambda\text{dot}_{n})$ as the vertex set and the adjacency is determined by orthogonality as before. We borrow some techniques from section 2 of \cite{AK} to prove the following theorem.

\begin{theorem}
Let $k < n/3$. The second largest eigenvalue $\overline{\lambda} $ of $\overline{\Gamma}$ is $(1+o(1))q^{k(n-2k)/2}$. 
\end{theorem}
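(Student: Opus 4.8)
The plan is to compute the entire spectrum of the adjacency matrix $A$ of $\overline{\Gamma}$ by passing to $A^{2}$, which — unlike $A$ itself — lies in the Bose--Mesner algebra of the Grassmann ($q$-Johnson) scheme $J_{q}(n,k)$. Since $\lambda$dot$_{n}$ is nondegenerate, $\dim U^{\perp}=n-k$ for every $k$-subspace $U$, so $\overline{\Gamma}$ is regular of degree $\binom{n-k}{k}_{q}$ and $A$ is symmetric. First I would record that for $k$-subspaces $U,U'$,
\[
(A^{2})_{U,U'}=\#\{\,W:\ W\subseteq U^{\perp}\cap U'^{\perp}\,\}=\binom{\,n-2k+\dim(U\cap U')\,}{k}_{q},
\]
using $U^{\perp}\cap U'^{\perp}=(U+U')^{\perp}$ together with $\dim(U+U')=2k-\dim(U\cap U')$. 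Thus $(A^{2})_{U,U'}$ depends only on $\dim(U\cap U')$, whence
\[
A^{2}=\sum_{j=0}^{k}\binom{n-k-j}{k}_{q}M_{j},
\]
where $M_{j}$ is the adjacency matrix of the relation $\dim(U\cap U')=k-j$ in $J_{q}(n,k)$. Equivalently $A^{2}=N^{\top}N$ for the inclusion matrix $N=W_{k,\,n-k}$ of $k$-subspaces in $(n-k)$-subspaces, so the eigenvalues of $A$ are exactly the singular values of $N$ taken with both signs.

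Next I would diagonalize. The scheme $J_{q}(n,k)$ is a symmetric $P$-polynomial association scheme with common eigenspaces $H_{0},\dots,H_{k}$, where $\dim H_{i}=\binom{n}{i}_{q}-\binom{n}{i-1}_{q}$, and the eigenvalues of the $M_{j}$ on $H_{i}$ are the known $q$-Eberlein (dual $q$-Hahn) numbers $P_{j}(i)$. Substituting these into the expansion of $A^{2}$ above — or, following the singular-value computation for inclusion matrices in Section~2 of \cite{AK} — collapses the alternating sum $\sum_{j}\binom{n-k-j}{k}_{q}P_{j}(i)$ to the closed form in which $A^{2}$ acts on $H_{i}$ as the scalar
\[
\mu_{i}=q^{\,i(n-2k)}\binom{n-k-i}{k-i}_{q}^{2}\qquad(i=0,1,\dots,k).
\]
Hence the spectrum of $\overline{\Gamma}$ consists of the values $\pm\,q^{\,i(n-2k)/2}\binom{n-k-i}{k-i}_{q}$, each with total multiplicity $\dim H_{i}$. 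The case $i=0$ gives $\mu_{0}=\binom{n-k}{k}_{q}^{2}=d^{2}$, i.e. the Perron eigenvalue $d$, a useful consistency check.

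Finally I would extract the second largest eigenvalue (in absolute value) from this list. Using $\binom{n-k-i}{k-i}_{q}=(1+o(1))\,q^{(k-i)(n-2k)}$ as $q\to\infty$, the $i$-th eigenvalue has magnitude $(1+o(1))\,q^{(n-2k)(2k-i)/2}$; I would compare the $k$ nontrivial magnitudes ($i=1,\dots,k$) to single out the relevant one and verify that, under the hypothesis $k<n/3$, it is $(1+o(1))\,q^{k(n-2k)/2}$, giving $\overline{\lambda}$ as claimed. This value would then be fed into eigenvalue interlacing to bound the second eigenvalue of the induced subgraph $\Gamma^{\square}(n,k,q)$, exactly as in \cite{BIP}.

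The main obstacle is the exact diagonalization: one must show that the alternating combination $\sum_{j}\binom{n-k-j}{k}_{q}P_{j}(i)$ telescopes to the clean square $\mu_{i}=q^{\,i(n-2k)}\binom{n-k-i}{k-i}_{q}^{2}$ (equivalently, compute the singular values of $W_{k,\,n-k}$ via the raising and lowering operators acting on $\bigoplus_{i}H_{i}$) rather than estimating term by term, since the individual summands are much larger than $\mu_{i}$ and cancel. The accompanying bookkeeping — correctly identifying which $\mu_{i}$ governs the second largest eigenvalue and pinning down its $q\to\infty$ asymptotics under the constraint $k<n/3$ — is the delicate remaining point.
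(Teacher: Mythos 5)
Your overall strategy is sound and in fact more careful than the paper's own argument. The paper's proof rests on the identity $A^{2}=aJ+(d-a)I$ with $a=\binom{n-2k}{k}_{q}$, i.e.\ on the assertion that the number of $k$-subspaces $x\subseteq y^{\perp}\cap z^{\perp}$ is the same for \emph{every} pair of distinct vertices $y,z$. Your entry computation $(A^{2})_{U,U'}=\binom{\,n-2k+\dim(U\cap U')\,}{k}_{q}$ shows exactly why that fails for $k\ge 2$: the entry depends on $\dim(U\cap U')$, so $A^{2}$ does not lie in the span of $I$ and $J$, and one really does have to diagonalize over the Grassmann scheme as you propose. (For $k=1$ distinct subspaces always meet trivially, which is why the argument of \cite{BIP} goes through in that case.)

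The gap is in the step you defer to the end. Your eigenvalue formula $\mu_{i}=q^{\,i(n-2k)}\binom{n-k-i}{k-i}_{q}^{2}$ is correct --- for instance, for $n=5$, $k=2$ it reproduces the exact values $(q^{2}+q+1)^{2}$, $q(q+1)^{2}$, $q^{2}$ obtained by writing $N^{\top}N=\binom{3}{2}_{q}I+M_{1}$ with $M_{1}$ the Grassmann graph, and it passes the trace check $\sum_{i}\mu_{i}\dim H_{i}=\binom{n}{k}_{q}\binom{n-k}{k}_{q}$. But it gives nontrivial eigenvalue magnitudes $\sqrt{\mu_{i}}=(1+o(1))\,q^{(n-2k)(2k-i)/2}$, which are \emph{decreasing} in $i$ whenever $n>2k$. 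Hence the maximum over $i=1,\dots,k$ is attained at $i=1$ and equals $(1+o(1))\,q^{(n-2k)(2k-1)/2}$, not $(1+o(1))\,q^{k(n-2k)/2}$; the two exponents agree only when $k=1$, and for $k\ge 2$ the former is strictly larger. So the verification you postpone as ``the delicate remaining point'' cannot be completed: your own (correct) diagonalization shows that the stated asymptotic for $\overline{\lambda}$ fails for all $k\ge 2$, and the hypothesis $k<n/3$ does not rescue it (it only guarantees that the magnitudes decrease in $i$). The conclusion should either be restricted to $k=1$ or replaced by $\overline{\lambda}=(1+o(1))\,q^{(2k-1)(n-2k)/2}$, with the change propagated through the interlacing corollaries that follow.
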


\begin{proof}
We first note that the size of vertices of $\overline{\Gamma}$ is  $N=\binom{n}{k}_{q}=(1+o(1))q^{k(n-k)}$. From the properties of adjacency matrix $A$ of $\Gamma$, we obtain
\[A^{2}=AA^{T}=aJ+(d-a)I,\]
where $J$ is all $1$'s matrix, $I$ is the identity matrix, $a=\binom{n-2k}{k}_{q}=(1+o(1))q^{k(n-3k)}$ and $d=\binom{n-k}{k}_{q}=(1+o(1))q^{k(n-2k)}$. Here, $a$ is the number of $k$-dimensional subspaces $x$ such that $x \subseteq y^{\perp}$ and $x \subseteq z^{\perp}$ for given two distinct $k$-dimensional subspaces $y,z$. Similarly, $d$ is the number of $k$-dimensional subspaces $x$ satisfying $x \subseteq y^{\perp }$ for a given $k$-dimensional subspace $y$. Since $J$ and $I$ commute, $J$ and $I$ are simultaneously diagonalizable. Moreover, since $A$ is $d$-regular, the biggest eigenvalue of $A$ is $d$. Thus we have the following table.
\begin{table}[H]
\begin{center}
\begin{tabular}{c|c|c}
\textbf{eigenvalues of} $\mathbf{J}$ &  \textbf{eigenvalues of} $\mathbf{I}$ & \textbf{eigenvalues of} $\mathbf{A^{2}}$ \\ \hline \hline
$N$ &  $1$ & 
$aN+(d-a)1=d^{2}$ \\ \hline
$0$ &  $1$ & 
$d-a$ \\ \hline
$\vdots$ &  $\vdots$ & 
$\vdots$ \\ \hline
$0$   & $1$  & $d-a$ \\ 
\end{tabular}
\caption{Eigenvalues of $J,I$ and $A^{2}$.}
\label{comp}
\end{center}
\end{table}

\smallskip
Let $\overline{\lambda}_{1} \geq \overline{\lambda}_{2} \geq \cdots \geq  \overline{\lambda}_{n}$ be the eigenvalue of $\overline{\Gamma}$. Therefore, $\overline{\Gamma}$ has the second largest eigenvalue $\overline{\lambda}=\text{max}\left \{ \overline{\lambda}_{2},-\overline{\lambda}_{N} \right \}=\sqrt{d-a}=(1+o(1))q^{k(n-2k)/2}$.
\end{proof}

\begin{corollary}
Let $k < n/3$. Then $\overline{\Gamma}$ is an $(\overline{n},\overline{d},\overline{\lambda})$-graph with $\overline{n}=(1+o(1))q^{k(n-k)}, \overline{d}=(1+o(1))q^{k(n-2k)}$ and $\overline{\lambda}=(1+o(1))q^{k(n-2k)/2}$.
\end{corollary}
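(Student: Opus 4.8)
The plan is to read all three parameters directly off the proof of the preceding Theorem, since by definition an $(\overline{n},\overline{d},\overline{\lambda})$-graph is precisely a $\overline{d}$-regular graph on $\overline{n}$ vertices whose second-largest eigenvalue in absolute value is at most $\overline{\lambda}$. First I would record the vertex count: the vertices of $\overline{\Gamma}$ are the $k$-dimensional vector subspaces of $(\mathbb{F}_q^n,\lambda\text{dot}_n)$, so $\overline{n}=\binom{n}{k}_{q}$, and the standard asymptotic for the Gaussian binomial coefficient, $\binom{n}{k}_{q}=\prod_{i=0}^{k-1}\frac{q^{n-i}-1}{q^{k-i}-1}$, gives $\overline{n}=(1+o(1))q^{k(n-k)}$ as $q\to\infty$.

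Next I would read off regularity from the identity $A^{2}=aJ+(d-a)I$ established in the previous proof. The diagonal entries of $A^{2}=AA^{T}$ count closed walks of length two, which for a $0/1$ adjacency matrix equals the degree; hence every vertex has exactly $d=\binom{n-k}{k}_{q}$ neighbours and $\overline{\Gamma}$ is $\overline{d}$-regular with $\overline{d}=\binom{n-k}{k}_{q}$. Because $k<n/3<n/2$ we have $n-k\geq k$, so this coefficient is nonzero and the same asymptotic yields $\overline{d}=(1+o(1))q^{k(n-2k)}$. The eigenvalue bound then needs no new work: it is exactly the content of the preceding Theorem, namely $\overline{\lambda}=\max\{\overline{\lambda}_{2},-\overline{\lambda}_{N}\}=\sqrt{d-a}=(1+o(1))q^{k(n-2k)/2}$.

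Since all three quantities are furnished by the previous theorem and its proof, there is essentially no obstacle here; the corollary is a packaging statement and the proof is a one-line assembly. The only point deserving a line of care is the role of the hypothesis $k<n/3$ (stronger than the $k<n/2$ used for vertex- and arc-transitivity): it guarantees $n-2k\geq k$, so that $a=\binom{n-2k}{k}_{q}=(1+o(1))q^{k(n-3k)}$ is a genuine nonzero count and the strongly-regular-type decomposition $A^{2}=aJ+(d-a)I$ is valid. One then checks $a/d=(1+o(1))q^{-k^{2}}\to 0$, which is exactly what forces $\sqrt{d-a}=(1+o(1))\sqrt{d}$ and hence produces the clean factor $(1+o(1))q^{k(n-2k)/2}$ rather than merely an estimate up to a constant.
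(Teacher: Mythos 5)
Your proposal is correct and matches the paper's treatment: the paper states this corollary without proof precisely because, as you observe, all three parameters ($\overline{n}=\binom{n}{k}_{q}$, $\overline{d}=\binom{n-k}{k}_{q}$, and $\overline{\lambda}=\sqrt{d-a}$) are established in the statement and proof of the preceding theorem, so the corollary is pure assembly. Your added remarks on the role of $k<n/3$ and the estimate $a/d=(1+o(1))q^{-k^{2}}$ are accurate and harmless elaboration.
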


\begin{corollary}\label{cor9}
Let $k < n/3$. Then $\Gamma^{\square}(n,k,q)$ is an $(n'',d'',\lambda'')$-graph with $n''=(1+o(1))q^{k(n-k)}/2,d''=(1+o(1))q^{k(n-2k)}/2$ and $\lambda''=(1+o(1))q^{k(n-2k)/2}$.
\end{corollary}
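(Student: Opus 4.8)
The plan is to exhibit $\Gamma^{\square}(n,k,q)$ as an induced subgraph of $\overline{\Gamma}$ and then to transfer the eigenvalue estimate already proved for $\overline{\Gamma}$ by means of Cauchy interlacing. The number of vertices $n''=(1+o(1))q^{k(n-k)}/2$ and the regularity with degree $d''=(1+o(1))q^{k(n-2k)}/2$ have already been established, so the only quantity that remains to be controlled is the second largest eigenvalue in absolute value.

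First I would note that each dot$_{k}$-subspace is in particular a $k$-dimensional vector subspace of $(\mathbb{F}_{q}^{n},\lambda\text{dot}_{n})$, so the vertex set of $\Gamma^{\square}(n,k,q)$ is a subset of that of $\overline{\Gamma}$; since both graphs declare $x$ and $y$ adjacent exactly when $x\subseteq y^{\perp}$, the graph $\Gamma^{\square}(n,k,q)$ is precisely the subgraph of $\overline{\Gamma}$ induced on the dot$_{k}$-subspaces. Hence its adjacency matrix is a principal submatrix of the adjacency matrix of $\overline{\Gamma}$, and I would invoke the Cauchy interlacing theorem.

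Writing $\overline{\lambda}_{1}\geq\cdots\geq\overline{\lambda}_{N}$ for the eigenvalues of $\overline{\Gamma}$ and $\mu_{1}\geq\cdots\geq\mu_{m}$ for those of $\Gamma^{\square}(n,k,q)$, interlacing yields $\mu_{2}\leq\overline{\lambda}_{2}$ and $\mu_{m}\geq\overline{\lambda}_{N}$. By the theorem and corollary above, $\max\{\overline{\lambda}_{2},-\overline{\lambda}_{N}\}=\overline{\lambda}=(1+o(1))q^{k(n-2k)/2}$, so both $\mu_{2}$ and $-\mu_{m}$ are at most $\overline{\lambda}$. Because $\Gamma^{\square}(n,k,q)$ is $d''$-regular, its top eigenvalue is $\mu_{1}=d''$; therefore the second largest eigenvalue in absolute value, namely $\max\{\mu_{2},-\mu_{m}\}$, is at most $(1+o(1))q^{k(n-2k)/2}=\lambda''$, which is exactly the required bound.

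I expect the only genuine subtlety to lie in the interlacing bookkeeping rather than in any new estimate: one must retain the hypothesis $k<n/3$ so that the eigenvalue bound for $\overline{\Gamma}$ from the preceding theorem is available, and one must track the indices so that the two one-sided interlacing inequalities combine to bound $\max\{\mu_{2},-\mu_{m}\}$. Since interlacing can only push the eigenvalues of the induced subgraph inward, passing from $\overline{\Gamma}$ to the roughly half-sized vertex set of $\Gamma^{\square}(n,k,q)$ never enlarges the spectral bound, so the factor $1+o(1)$ is preserved without any additional computation.
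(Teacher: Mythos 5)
Your proof is correct and follows essentially the same route as the paper: both identify $\Gamma^{\square}(n,k,q)$ as an induced subgraph of $\overline{\Gamma}$ (yours makes this step explicit, which is a nice touch) and then apply Cauchy interlacing to get $\lambda''_{2}\leq\overline{\lambda}_{2}$ and $-\lambda''_{n''}\leq-\overline{\lambda}_{N}$, hence $\lambda''\leq\overline{\lambda}=(1+o(1))q^{k(n-2k)/2}$. No substantive differences.
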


\begin{proof}
Let $\lambda_{1}'' \geq \lambda_{2}'' \geq \cdots \geq \lambda_{n''}''$ be the eigenvalues of $\Gamma^{\square}(n,k,q)$. By interlacing, $\overline{\lambda}_{i} \geq \lambda_{i}'' \geq \overline{\lambda}_{N-n''+i}$ for any $i=1,\cdots,n$. It follows that $\lambda''_{2} \leq \overline{\lambda}_{2}$ and $-\lambda''_{n} \leq -\overline{\lambda}_{N}$. Therefore, we have $\lambda''=\text{max}\left \{ \lambda_{2}'',-\lambda_{n}'' \right \}\leq \overline{\lambda}=(1+o(1))q^{k(n-2k)/2}$.
\end{proof}

As an application, we apply spectral gap theorem to the graph $\Gamma^{\square}(n,k,q)$. The spectral gap theorem says that for a finite $d$-regular graph with a vertex set $V$, if $X,Y \subseteq V$ and
\[\sqrt{|X||Y|}>n_{*}=|V|\left (\frac{\text{max}(|\lambda_{2}|,\cdots,|\lambda_{n}|)}{d}\right ),\]
then there exist edges between sets $X$ and $Y$. For the graph $\Gamma^{\square}(n,k,q)$, let $V(\Gamma^{\square}(n,k,q))$ be the vertex set of $\Gamma^{\square}(n,k,q)$ and $X,Y$ be subsets of $V(\Gamma^{\square}(n,k,q))$. By Corollary \ref{cor9}, we obtain
\[n_{*}=n''\left(\frac{\text{max}(\lambda_{2}'',-\lambda_{n''}'')}{d''} \right)\leq (1+o(1))\frac{q^{k(n-k)}}{2}\left ( \frac{(1+o(1))q^{k(n-2k)/2}}{(1+o(1))q^{k(n-2k)}/2} \right )\approx (1+o(1))q^{\frac{nk}{2}}.\]
Therefore, by spectral gap theorem, we have:

\begin{corollary}
Let $k < n/3$. If $|X||Y|>(1+o(1))q^{nk}$, then there exist edges between $X$ and $Y$. In other words, there exists dot$_{k}$-subspaces in $X$ and $Y$ which are orthogonal each other.
\end{corollary}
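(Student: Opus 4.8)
The plan is to apply the spectral gap theorem directly, since every quantity it takes as input has already been assembled in Corollary \ref{cor9}. By that corollary, for $k < n/3$ the graph $\Gamma^{\square}(n,k,q)$ is a $d''$-regular graph on $n''$ vertices all of whose nontrivial eigenvalues are bounded in absolute value by $\lambda'' = (1+o(1))q^{k(n-2k)/2}$, with $d'' = (1+o(1))q^{k(n-2k)}/2$ and $n'' = (1+o(1))q^{k(n-k)}/2$. Substituting these asymptotics into the threshold $n_* = |V|\bigl(\max(|\lambda_2|,\dots,|\lambda_n|)/d\bigr)$ and simplifying the powers of $q$ yields $n_* \leq (1+o(1))q^{nk/2}$, which is precisely the computation carried out immediately before the statement. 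The factors of $\tfrac12$ in $n''$ and $d''$ cancel, and the exponent collapses to $k(n-k) + k(n-2k)/2 - k(n-2k) = nk/2$.

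Given this, the argument is short. First I would fix $X, Y \subseteq V(\Gamma^{\square}(n,k,q))$ with $|X||Y| > (1+o(1))q^{nk}$ and take square roots to obtain $\sqrt{|X||Y|} > (1+o(1))q^{nk/2} \geq n_*$. The hypothesis of the spectral gap theorem is thereby met, so there must exist at least one edge with one endpoint in $X$ and the other in $Y$. Finally I would translate this back into the language of quadratic spaces: by the definition of $\Gamma^{\square}(n,k,q)$, adjacency $x \sim y$ means exactly $x \subseteq y^{\perp}$, so the two dot$_k$-subspaces $x \in X$ and $y \in Y$ joined by that edge are orthogonal to each other, which is the desired conclusion.

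The only point requiring genuine care — there is no deep obstacle here — is the bookkeeping of the $(1+o(1))$ error terms as $q \to \infty$. One must check that the product of the $(1+o(1))$ factors coming from $n''$, $\lambda''$, and $1/d''$ collapses to a single $(1+o(1))$ multiplying $q^{nk/2}$, and that this is dominated by the $(1+o(1))q^{nk/2}$ produced by the square root of the hypothesis, so that the strict inequality $\sqrt{|X||Y|} > n_*$ actually holds for all sufficiently large $q$. Since every quantity is $\Theta$ of an explicit power of $q$ with matching exponents on the two sides, this reduces to a comparison of leading constants and presents no real difficulty.
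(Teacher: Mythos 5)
Your proposal is correct and follows essentially the same route as the paper: the paper likewise treats this corollary as an immediate consequence of the spectral gap theorem, substituting the parameters $n''$, $d''$, $\lambda''$ from Corollary \ref{cor9} to bound the threshold $n_*$ by $(1+o(1))q^{nk/2}$ and then reading off orthogonality from the adjacency relation. Your exponent bookkeeping $k(n-k)+k(n-2k)/2-k(n-2k)=nk/2$ matches the computation displayed just before the statement.
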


\medskip

\end{document}